\pgfplotsset{compat=newest}
\newtheorem{theorem}{Theorem}[section]
\newtheorem{proposition}[theorem]{Proposition}
\newtheorem{corollary}[theorem]{Corollary}
\theoremstyle{definition}
\newtheorem{definition}[theorem]{Definition}
\theoremstyle{remark}
\newcommand{\R}{\mathbb{R}}
\newcommand{\N}{\mathbb{N}}
\newcommand{\loc}{\textnormal{loc}}
\newcommand{\dx}{\textnormal{d}x}
\newcommand{\B}{{\cal B}}
\begin{document}
\begin{center}
{\large{\bf Stability Indices of Non-Hyperbolic Equilibria in Two-Dimensional Systems of ODEs}}\\
\mbox{} \\
\begin{tabular}{c}
{\bf Alexander Lohse} \\
{\small alexander.lohse@uni-hamburg.de}\\
{\small Fachbereich Mathematik, Universit\"at Hamburg}\\
{\small Bundesstra{\ss}e 55, 20146 Hamburg, Germany}\\
\end{tabular}
\end{center}

\begin{abstract}
We consider families of systems of two-dimensional ordinary differential equations with the origin $0$ as a non-hyperbolic equilibrium. For any number $s \in (-\infty, +\infty)$ we show that it is possible to choose a parameter in these equations such that the stability index $\sigma(0)$ is precisely $\sigma(0)=s$. In contrast to that, for a hyperbolic equilibrium $x$ it is known that either $\sigma(x)=-\infty$ or $\sigma(x)=+\infty$. Furthermore, we discuss a system with an equilibrium that is locally unstable but globally attracting, highlighting some subtle differences between the local and non-local stability indices.
\end{abstract}

\noindent {\em Keywords:} stability, attraction, non-hyperbolic equilibrium

\vspace{.3cm}

\noindent {\em AMS classification:} 34D20, 37C25, 37C75


\section{Introduction}
Attraction and stability of invariant sets are crucial concepts in the qualitative theory of dynamical systems: the degree to which a set possesses these properties is directly linked to the way it influences the overall (longterm) dynamics of a system. Beyond the classic notion of asymptotic (Lyapunov) stability several levels of so-called \emph{non-asymptotic stability} have been identified. These include \emph{fragmentary asymptotic stability (f.a.s.)} \cite{Podvigina2012} and \emph{essential asymptotic stability (e.a.s.)} \cite{Melbourne1991} to mention probably the two most frequent ones. Loosely speaking, an f.a.s.\ set attracts something of positive measure while an e.a.s.\ set attracts ``almost everything" in a small neighbourhood.

In 2011 Podvigina and Ashwin \cite{PodviginaAshwin2011} introduced a \emph{(local) stability index} as a means of quantifying stability and attraction of invariant sets in discrete and continuous dynamical systems. It is linked to the stability properties mentioned above: roughly speaking, positive indices correspond to essential asymptotic stability, while fragmentary asymptotic stability is associated with indices that are greater than $-\infty$, see \cite{Lohse2015b} for a detailed discussion of this. In the last decade, this concept has been used to characterize various types of attractors, e.g.\ heteroclinic cycles/networks \cite{BickLohse2019, CastroLohse2014, GarridoDaSilvaCastro2019}, invariant graphs in skew product systems \cite{Keller2014} or attractors with riddled basins \cite{RoslanAshwin2016}.

For the simple case of a hyperbolic equilibrium the stability index does not reveal significant information, since it turns out to be either $+\infty$ (for a sink) or $-\infty$ (for a saddle or source). In this paper we discuss two families of ordinary differential equations on $\R^2$ that possess the origin $0$ as a non-hyperbolic equilibrium. We show that
\begin{itemize}
\item[(i)] for any given real number $s>0$ we can choose a parameter in the first family such that we obtain $\sigma(0)=s$, and
\item[(ii)] the same is possible for any $s<0$ in the second family.
\end{itemize}
This confirms that non-hyperbolic equilibria can indeed be f.a.s.\ or e.a.s\ without being asymptotically stable.

We also present an example of a smooth system with a non-hyperbolic equilibrium that is strongly attracting (stability index equal to $+\infty$) but at the same time locally repels most initial conditions (local stability index equal to $-\infty$). Systems with similar properties in previous work \cite{Lohse2015} lacked smoothness.

In higher-dimensional systems our results may be useful for understanding the dynamics along the centre manifold of an equilibrium, thus helping to better describe stability and attraction properties of non-hyperbolic steady states. Moreover, the way we design these systems might serve as a prototype for controlling stability indices in more involved settings, e.g.\ along heteroclinic connections.

The paper is organized as follows: in section~\ref{sec-prelim} we briefly discuss non-asymptotic stability and the (local) stability index. In section~\ref{sec-example} we present our examples and prove that the equilibria possess the desired stability indices. We conclude with some comments in section~\ref{sec-comments}.

\section{Preliminaries}\label{sec-prelim}
In this section we reproduce the definitions of fragmentary and essential asymptotic stability of a compact, invariant set $X \subset \R^n$ for a dynamical system on $\R^n$ given by $\dot x = f(x)$. Moreover, we recall the stability index that was introduced to quantify stability and attraction of such a set.

In line with standard notation we write $B_\varepsilon(x)$ for an $\varepsilon$-neighbourhood of a point $x \in \R^n$ and use $\ell(.)$ for Lebesgue measure. The basin of attraction of $X$, i.e.\ the set of points in $\R^n$ with $\omega$-limit set in $X$, is denoted by $\B(X)$. For $\delta >0$ the $\delta$-local basin of attraction $\B_\delta(X)$ is the subset of points in $\B(X)$ for which the trajectory never leaves $B_\delta(X)$ in positive time.

With this terminology we revisit the following definitions.

\begin{definition}[\cite{Podvigina2012}, definition 2]
$X$ is called \emph{fragmentarily asymptotically stable (f.a.s.)} if $\ell(\B_\delta(X))>0$ for any $\delta>0$.
\end{definition}

As discussed in \cite{Lohse2014} being f.a.s.\ is equivalent to having a basin of attraction of positive measure.

\begin{definition}[\cite{Brannath1994}, definition 1.2]
$X$ is called \emph{essentially asymptotically stable (e.a.s.)} if it is asymptotically stable relative to a set $N \subset \R^n$ which satisfies $$\lim\limits_{\varepsilon \to 0} \frac{\ell(B_\varepsilon(X) \cap N)}{\ell(B_\varepsilon(X))}=1.$$
\end{definition}

Here \emph{asymptotic stability relative to $N$} means that the usual conditions for asymptotic stability must be fulfilled for the intersection of a neighbourhood of $X$ with $N$, but not necessarily in an entire neighbourhood.

Note that in \cite{Melbourne1991} e.a.s.\ is used in the same sense as above, even though a slightly different definition is given.

\begin{definition}[\cite{PodviginaAshwin2011}, definition 5]
For $x \in X$ and $\varepsilon, \delta >0$ set 
\begin{equation*}
\Sigma_\varepsilon(x):=\frac{\ell(B_\varepsilon(x) \cap \B(X))}{\ell(B_\varepsilon(x))}, \qquad \Sigma_{\varepsilon,\delta}(x):=\frac{\ell(B_\varepsilon(x) \cap \B_\delta(X))}{\ell(B_\varepsilon(x))}.
\end{equation*}
Then the {\em stability index} at $x$ with respect to $X$ is defined as
\begin{equation*}
\sigma(x):=\sigma_+(x)-\sigma_-(x),
\end{equation*}
with
\begin{equation*}
\sigma_-(x):= \lim\limits_{\varepsilon \to 0}  \frac{\textnormal{ln}(\Sigma_\varepsilon(x) )}{\textnormal{ln}(\varepsilon)}, \qquad \sigma_+(x):= \lim\limits_{\varepsilon \to 0} \frac{\textnormal{ln}(1-\Sigma_\varepsilon(x) )}{\textnormal{ln}(\varepsilon)}.
\end{equation*}
The convention that $\sigma_-(x)=\infty$ if $\Sigma_\varepsilon(x)=0$ for some $\varepsilon>0$, and $\sigma_+(x)=\infty$ if $\Sigma_\varepsilon(x)=1$ for some $\varepsilon>0$, implies $\sigma(x) \in [-\infty, \infty]$.

Analogously, the {\em local stability index} at $x \in X$ is defined to be
\begin{equation*}
\sigma_\loc(x):=\sigma_{\textnormal{loc},+}(x)-\sigma_{\textnormal{loc},-}(x),
\end{equation*}
with
\begin{equation*}
\sigma_{\textnormal{loc},-}(x):= \lim\limits_{\delta \to 0} \lim\limits_{\varepsilon \to 0} \frac{\textnormal{ln}(\Sigma_{\varepsilon,\delta}(x))}{\textnormal{ln}(\varepsilon)}, \: \sigma_{\textnormal{loc},+}(x):= \lim\limits_{\delta \to 0} \lim\limits_{\varepsilon \to 0} \frac{\textnormal{ln}(1-\Sigma_{\varepsilon,\delta}(x))}{\textnormal{ln}(\varepsilon)}.
\end{equation*}
\end{definition}

For an invariant set $X \subset \R^n$ and a point $x \in X$ the index $\sigma(x)$ quantifies attraction to $X$ near $x$ in the system. In the same way the local index $\sigma_{\loc}(x)$ characterizes (Lyapunov) stability of $X$ near $x$. While these two properties often go hand in hand (and the local and non-local indices may coincide), it is well-known that they are independent of each other (so local and non-local indices may differ), see examples in \cite{Lohse2015}.

For a geometric intuition consider Figure~\ref{stabindex}: if $\sigma(x)>0$, then in a small neighbourhood of $x$ an increasingly large portion of points is contained in the basin of attraction $\B(X)$ and therefore attracted to $X$. If on the other hand $\sigma(x)<0$, then the portion of such points goes to zero as the neighbourhood $B_\varepsilon(x)$ shrinks. The meaning of signs for the local stability index may be illustrated analogously.

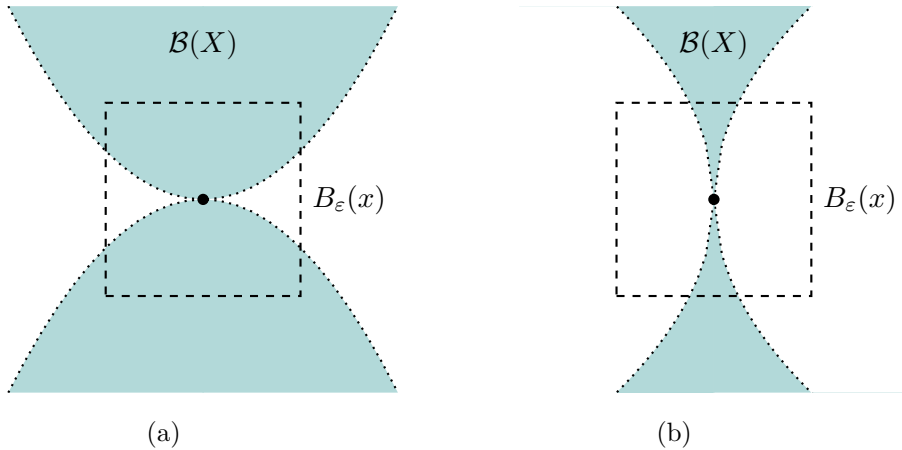
\begin{figure}
\centering
    \subfigure[]{
    \begin{tikzpicture}
    \begin{axis}[axis line style={draw=none}, tick style={draw=none},
    axis lines = middle, axis equal, scale=0.9,
    yticklabels={,,},
    xticklabels={,,}
    xmin=-1, xmax=1,
    ymin=-1, ymax=1]
\addplot [name path = A, thick, dotted, domain = -1:1] {x^2};
\addplot [name path = C, thick, dotted, domain = -1:1] {-x^2};
    \draw[teal!30, name path = V1] (0,0) -- (0,1);
    \draw[teal!30, name path = V2] (0,-1) -- (0,0);
    \path (-1,0) -- (1,0);
    \path[name path = H1] (-1,1) -- (0,1);
    \path[name path = H2] (0,-1) -- (1,-1);
    \addplot [teal!30] fill between [of = A and H1];
    \addplot [teal!30] fill between [of = C and H2];
\node at (0,0) [circle,fill,inner sep=1.5pt]{};
\node at (0,0.8) {\small $\B(X)$};
\node at (0.75,0) {\small $B_\varepsilon(x)$};
\draw[dashed, thick] (-0.5,-0.5) rectangle (0.5,0.5);
\end{axis}
\end{tikzpicture}
    }
    \subfigure[]{
    \begin{tikzpicture}
    \begin{axis}[axis line style={draw=none}, tick style={draw=none},
    axis lines = middle, axis equal, scale=0.9,
    yticklabels={,,},
    xticklabels={,,}
    xmin=-1, xmax=1,
    ymin=-1, ymax=1]
\addplot [name path = A, thick, dotted, domain = 0:1] {2*sqrt(x/2)};
\addplot [name path = B, thick, dotted, domain = -1:0] {2*sqrt(-x/2)};
\addplot [name path = C, thick, dotted, domain = 0:1] {-2*sqrt(x/2)};
\addplot [name path = D, thick, dotted, domain = -1:0] {-2*sqrt(-x/2)};
    \draw[teal!30, name path = V1] (0,0) -- (0,1);
    \draw[teal!30, name path = V2] (0,-1) -- (0,0);
    \path (-1,0) -- (1,0);
    \path[name path = H1] (-1,1) -- (0,1);
    \path[name path = H2] (0,-1) -- (1,-1);
    \addplot [teal!30] fill between [of = A and V1];
    \addplot [teal!30] fill between [of = B and H1];
    \addplot [teal!30] fill between [of = C and H2];
    \addplot [teal!30] fill between [of = D and V2];
\node at (0,0) [circle,fill,inner sep=1.5pt]{};
\node at (0,0.8) {\small $\B(X)$};
\node at (0.75,0) {\small $B_\varepsilon(x)$};
\draw[dashed, thick] (-0.5,-0.5) rectangle (0.5,0.5);
\end{axis}
\end{tikzpicture}
    }
\caption{(a) an e.a.s.\ equilibrium with a positive stability index; (b) an f.a.s.\ equilibrium with a negative stability index.}
\label{stabindex}
\end{figure}

Since here we are interested in the stability of equilibria, we typically have $X=\{0\}$ in the following, which prompts us to conveniently shorten our notation to $\B(0)=\B(\{0\})$ etc.

\section{Stability Indices}\label{sec-example}
In this section we discuss several families of systems in $\R^2$, each with a non-hyperbolic equilibrium which, depending on a parameter in the equations, may possess any given real number as its stability index. Note that we define the systems only for $x,y \geq 0$, but they can easily be symmetrically extended to the whole plane. Most of the time local and non-local stability indices coincide -- we therefore only distinguish between the two when this is not the case.

\subsection{Positive Stability Indices}
We first present a class of systems in $\R^2$ with the origin $0$ as an equilibrium that can have any stability index in $(0, +\infty)$. With a parameter $a>1$, for $x,y \geq 0$ our system reads:

\begin{align}\label{system1}
\begin{cases}
\dot x &= x(x^a -y)\\
\dot y &= y \left ( \frac{1}{2}x^a -y \right )
\end{cases}
\end{align}

We remark that the right-hand side is at least $C^1$, but not $C^\infty$ if $a \not\in \N$.

It is easy to see that $0$ is a non-hyperbolic equilibrium of the system since the Jacobian is just the zero matrix. Both coordinate axes are invariant: for $y=0$ we have $\dot x = x^{a+1}>0$, so the $x$-axis belongs to the unstable set of $0$. Similarly, for $x=0$ we have $\dot y = -y^2<0$, so the $y$-axis belongs to the stable set of $0$.

The $x$- and $y$-nullclines off the coordinate axes are given by:

$$ \dot x = 0 \quad \Leftrightarrow \quad y= x^a \qquad \text{ and } \qquad \dot y = 0 \quad \Leftrightarrow \quad y=\frac{1}{2}x^a $$

This enables us to sketch the dynamics of system~(\ref{system1}) as in Figure~\ref{fig-greater1}. We now proceed to state and prove our result about the stability index.

\begin{figure}[!htb]
 \centering
\begin{tikzpicture}
\draw[thick] (-1,0) -- (7,0) node[anchor=north]{$x$};
\draw[thick] (0,-1) -- (0,5) node[anchor=east]{$y$};
\draw[dashed, thick] (0,0) parabola (6,2);
\draw[dashed, thick] (0,0) parabola (6,4);
\draw[dotted, thick] (0,0) rectangle (3,3);
\node at (0,0) [circle,fill,inner sep=1.5pt]{};
\draw [thick, -{Stealth[scale=1.5]}](0,4) -- (0,2);
\draw [thick, -{Stealth[scale=1.5]}](3,0) -- (4,0);
\draw [thick, -{Stealth[scale=1.5]}](2.6,0.5) -- (3.6,0.5);
\draw [thick, -{Stealth[scale=1.5]}](3.9,1) -- (4.9,1);
\draw [thick, -{Stealth[scale=1.5]}](4.8,1.5) -- (5.8,1.5);
\draw [thick, -{Stealth[scale=1.5]}](3.5,1.8) -- (3.5,0.8);
\draw [thick, -{Stealth[scale=1.5]}](4.5,2.7) -- (4.5,1.7);
\draw [thick, -{Stealth[scale=1.5]}](5.5,3.7) -- (5.5,2.7);
\node at (6,4.3) { $y=x^a$};
\node at (7,2.1) { $y=\frac{1}{2}x^a$};
\node at (2,3.3) { $B_\varepsilon(0)$};
\end{tikzpicture}
 \caption{Nullclines for system~(\ref{system1}) with $a>1$.}
 \label{fig-greater1}
\end{figure}
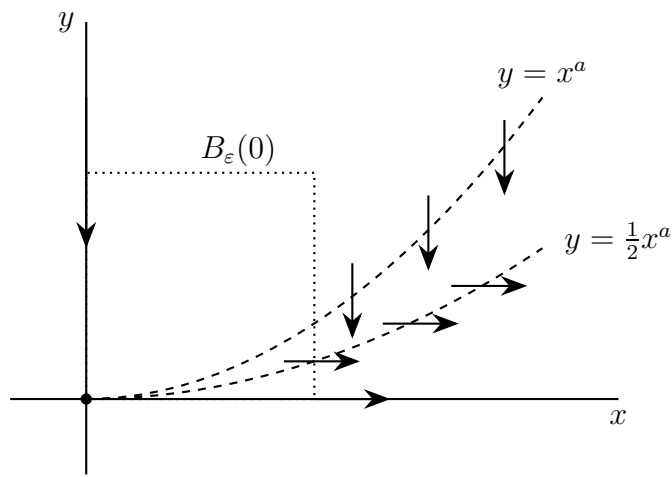

\begin{proposition}\label{prop1}
In system~(\ref{system1}), for $a >1$ the stability index of the origin is $\sigma(0)=a-1>0$.
\end{proposition}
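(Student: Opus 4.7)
My plan is to reduce the planar system to a one-dimensional scalar equation via the substitution $z := y/x^a$, identify the basin of attraction $\B(0)$ in a neighbourhood of the origin, and then evaluate the index by a direct area computation. A short application of the chain rule yields
\[
\dot x \,=\, x^{a+1}(1-z), \qquad \dot z \,=\, (a-1)\, z\, x^a\, (z-z^*), \qquad z^* := \frac{a - 1/2}{a-1} > 1 .
\]
After the time rescaling $d\tau = x^a \, dt$ the $z$-variable satisfies the autonomous scalar ODE $dz/d\tau = (a-1)\,z\,(z-z^*)$, with $z=0$ stable and $z=z^*$ unstable. Back in the original coordinates, this means that the algebraic curve $y = z^* x^a$ is flow-invariant for system~(\ref{system1}) and separates two distinct regimes.

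Next I would verify that in the open first quadrant $\B(0)$ equals $\{\, y > z^* x^a \,\} \cup \{\, x = 0\,\}$. On the $y$-axis one has $\dot y = -y^2$, so $y \to 0$. For $z(0) > z^*$ we obtain $\dot x < 0$ and $\dot y < 0$ throughout, and both $x,y \to 0$ monotonically in the original time $t$; this requires checking that $\int x^{-a}\, d\tau$ diverges along such trajectories. For $0 < z(0) < z^*$ one shows $z \to 0$ in the rescaled time, after which $\dot x = x(1-z) > 0$ drives $x$ to infinity in \emph{finite} $t$-time, so the point leaves every $B_\varepsilon(0)$ and is not in $\B(0)$. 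By the symmetric extension of the system to the whole plane the same geometry holds in the other three quadrants.

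The index then follows from a leading-order area estimate. For small $\varepsilon > 0$,
\[
\ell\bigl( B_\varepsilon(0) \setminus \B(0) \bigr) \,=\, 4 \int_0^\varepsilon z^* x^a \, dx \,+\, O(\varepsilon^{2a}) \,=\, \frac{4 z^*}{a+1}\, \varepsilon^{a+1} \,+\, O(\varepsilon^{2a}) ,
\]
while $\ell(B_\varepsilon(0)) = \pi \varepsilon^2$, so $1 - \Sigma_\varepsilon(0) \sim C\, \varepsilon^{a-1}$ for a positive constant $C$ and $\Sigma_\varepsilon(0) \to 1$. Taking logarithms and dividing by $\ln \varepsilon$ gives $\sigma_+(0) = a - 1$ and $\sigma_-(0) = 0$, so $\sigma(0) = a-1$.

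The main obstacle is the middle step: proving that the invariant curve $y = z^* x^a$ really is the boundary of $\B(0)$. The decoupled scalar $z$-ODE makes the qualitative picture transparent, but one must carefully translate between the rescaled time $\tau$ and the physical time $t$, via the integral $\int x^{-a}\, d\tau$, in order to conclude both convergence of basin trajectories in $t$ and finite-time escape of the non-basin ones. Once the basin is correctly identified, the stability index is insensitive to the specific value of $z^*$; only the exponent $a-1$, coming from the asymptotic shape $y \sim x^a$ of the separatrix, matters.
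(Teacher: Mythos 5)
Your argument is correct in substance and reaches the right answer, but it packages the key computation differently from the paper. The paper never identifies the basin boundary exactly: it sandwiches $\B(0)$ between two explicit barriers, showing (i) the forward-invariant region $\{y<x^a\}$ is disjoint from $\B(0)$, which gives $\sigma_+(0)\le a-1$, and (ii) for any $k>\tfrac{a-1/2}{a-1}$ the region $\{y>kx^a\}$ is forward invariant and contained in $\B(0)$, proved by checking the sign of $\langle(\dot x,\dot y),(-akx^{a-1},1)\rangle=kx^{2a}\left(k(a-1)-a+\tfrac12\right)$ along $y=kx^a$. Your substitution $z=y/x^a$ is the same computation in disguise --- your $\dot z=(a-1)zx^a(z-z^*)$ evaluated at $z=k$ has exactly the sign of that scalar product, and your $z^*$ is the paper's threshold constant --- but it buys a cleaner picture: a decoupled one-dimensional phase line after reparametrization, and the exact separatrix $y=z^*x^a$ rather than a one-parameter family of barriers. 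What it costs is the time-reparametrization bookkeeping you flag as the ``main obstacle'': your claim that $\B(0)$ is \emph{exactly} $\{y>z^*x^a\}\cup\{x=0\}$ requires showing that a trajectory starting with $1<z(0)<z^*$ (where initially $\dot x<0$ and $\dot y<0$) actually reaches $z<1$ at some $x>0$ rather than sliding into the origin; your sketch for ``$0<z(0)<z^*$'' assumes $\dot x>0$ from the start and so glosses over this subregion. This gap is real for the exact-basin claim but harmless for the proposition: as you note yourself, the index only needs the two one-sided bounds ($z<1$ forward invariant with $x$ increasing, hence outside $\B(0)$; $z>z^*$ forward invariant with $x,y$ decreasing monotonically to the unique equilibrium $0$), both of which your $z$-equation delivers immediately, and the exponent $a+1$ in the area estimate is insensitive to which curve $y=cx^a$ bounds the excluded set. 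Minor slips: $\dot x=x^{a+1}(1-z)$, not $x(1-z)$, in your escape argument, and the finite-time blow-up of $x$ is true but unnecessary --- monotone growth of $x$ already precludes convergence to $0$.
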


\begin{proof}
From Figure~\ref{fig-greater1} it is clear that all points $(x,y)$ with $y<x^a$ do not belong to the basin of attraction $\B(0)$. This enables our first estimate:

$$\ell(B_\varepsilon(0) \cap \B(0)) \leq \varepsilon^2 - \int\limits_0^\varepsilon x^a \dx = \varepsilon^2 - \frac{1}{1+a} \varepsilon^{1+a}$$
and therefore 
$$\Sigma_\varepsilon(0) = \frac{\ell (B_\varepsilon(0) \cap \B(0)) }{\ell(B_\varepsilon(0))} \leq \frac{1}{\varepsilon^2} \left(\varepsilon^2 - \frac{1}{1+a}  \varepsilon^{1+a} \right) = 1- \frac{1}{1+a} \varepsilon^{a-1},$$
or equivalently
$$1-\Sigma_\varepsilon(0) \geq \frac{1}{1+a} \varepsilon^{a-1}.$$
Hence
$$\sigma_+(0)= \lim_{\varepsilon \to 0} \frac{\ln (1 - \Sigma_\varepsilon(0))}{\ln (\varepsilon)} \leq \lim_{\varepsilon \to 0} \frac{\ln (\varepsilon^{a-1})}{\ln (\varepsilon)} =a-1,$$
which finally implies $\sigma(0)=\sigma_+(0)-\sigma_-(0) \leq a-1$.

\medbreak

For the other inequality we show that there is a constant $k>1$ such that all $(x,y)$ with $y>kx^a$ belong to $\B(0)$, in fact, even to all $\B_\delta(0)$ with suitable $\delta >0$. In other words: we show that this region is forward invariant under the dynamics of system~(\ref{system1}) and all trajectories in it converge to the origin.

We claim that for a given $a>1$ a choice of $k > \frac{a-\frac{1}{2}}{a-1}>1$ suffices. This we prove by showing that the vector $(\dot x, \dot y)$ in this region always points downwards and ``to the left'' of the curve $(x,kx^a)$, which means the corresponding solution is for all positive times confined between $(x,kx^a)$ and the $y$-axis, and thus must limit to $0$. To see this, first note that clearly $\dot y <-\frac{1}{2}y^2 <0$ in this region. Furthermore, we calculate that the angle $\alpha$ between $(\dot x, \dot y)$ and the normal vector $(-akx^{a-1},1)$ is always in $(-\frac{\pi}{2}, \frac{\pi}{2})$ along $(x,kx^a)$, see Figure~\ref{fig-angle}. To that end, consider the scalar product:

\begin{figure}[!htb]
 \centering
\begin{tikzpicture}
\draw[thick] (-1,0) -- (7,0) node[anchor=north]{$x$};
\draw[thick] (0,-1) -- (0,5) node[anchor=east]{$y$};
\node at (0,0) [circle,fill,inner sep=1.5pt]{};
\draw[dashed, thick] (0,0) parabola (6,4.5);
\draw [thick](3,1) -- (5,3);
\draw [thick,dotted](4,2) -- (3,3);
\draw[thick,shift={(2.2 cm,-1.85 cm)},rotate=40] (3.8,2.4) arc (90:160:0.5cm);
\draw [thick, -{Stealth[scale=1.5]}](5,2.2) -- (2,1.6);
\draw [thick, -{Stealth[scale=1.5]}](0,4) -- (0,2);
\draw [thick, -{Stealth[scale=1.5]}](3,0) -- (4,0);
\node at (6.4,4.8) { $y=kx^a$};
\node at (3.1,2.2) { $\alpha$};
\end{tikzpicture}
\caption{The angle $\alpha$ between the (dotted) normal vector to $(x,kx^a)$ and the flow of system~(\ref{system1}).}
\label{fig-angle}
\end{figure}
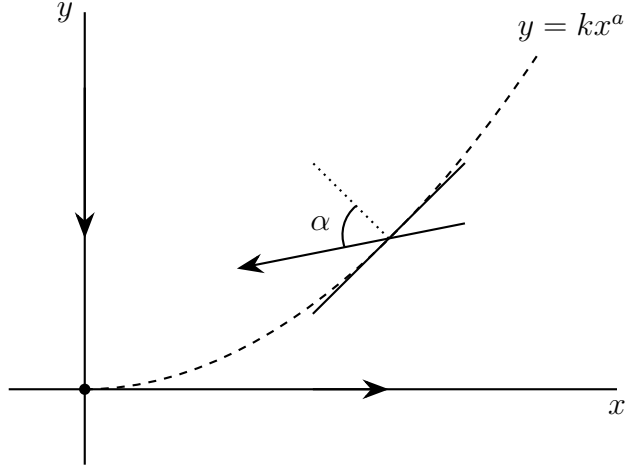

\begin{align*}
\langle (\dot x , \dot y),(-akx^{a-1},1) \rangle &= -akx^a(x^a-y) + y\left(\frac{1}{2}x^a-y \right)\\
&=-akx^a(x^a-kx^a) + kx^a \left(\frac{1}{2}x^a-kx^a \right)\\
&=kx^{2a} \left(a(k-1)+\frac{1}{2}-k \right)\\
&=kx^{2a} \left(k(a-1)-a+\frac{1}{2} \right),
\end{align*}
which is positive for all $x>0$ if and only if $k>1$ is chosen as above. Such a choice is obviously possible for any $a>1$. An analogous calculation to that at the beginning of this proof now yields $\sigma_+(0) \geq a-1$ and therefore $\sigma(0) \geq a-1$. Therefore, $\sigma(0)=a-1$ as claimed.
\end{proof}

\begin{corollary}
Given any $s>0$, set $a:=s+1>1$ to obtain $\sigma(0)=s$ in system~(\ref{system1}).
\end{corollary}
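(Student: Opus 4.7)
The plan is almost immediate: the corollary is a direct substitution into Proposition~\ref{prop1}. I would first verify that the hypothesis of Proposition~\ref{prop1} is satisfied by the proposed choice of parameter, namely that $a = s+1 > 1$ whenever $s > 0$. Then I would simply invoke the proposition to conclude $\sigma(0) = a - 1 = (s+1) - 1 = s$.

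Since Proposition~\ref{prop1} establishes $\sigma(0) = a - 1$ for every $a > 1$ in system~(\ref{system1}), and the map $a \mapsto a - 1$ is a bijection from $(1, \infty)$ onto $(0, \infty)$, every target value $s \in (0, \infty)$ is attained by exactly one admissible parameter, namely $a = s + 1$. There is no real obstacle here — the corollary is merely a reparametrisation of the proposition's conclusion, recording the surjectivity of $a \mapsto \sigma(0)$ onto $(0, \infty)$ as the parameter $a$ ranges over $(1, \infty)$. The only thing one might also wish to note is that the regularity remark after system~(\ref{system1}) still applies: the resulting vector field is at least $C^1$ for every such $a$, and $C^\infty$ precisely when $s + 1 \in \N$.
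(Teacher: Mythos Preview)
Your proposal is correct and matches the paper's approach exactly: the corollary is stated without proof in the paper, as it is an immediate substitution into Proposition~\ref{prop1}. Your additional remarks on bijectivity and regularity are accurate but not required.
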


\begin{corollary}
For $a>1$ the origin in system~(\ref{system1}) is e.a.s.
\end{corollary}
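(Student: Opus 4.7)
The plan is to exhibit an explicit set $N \subset \R^2$ witnessing essential asymptotic stability, reusing the forward-invariant region built in the proof of Proposition~\ref{prop1}. Concretely, fix $k > \frac{a-1/2}{a-1}$ as in that proof and let
\begin{equation*}
N := \{(x,y) : x \geq 0, \; y \geq k x^a\} \cup (\text{symmetric copies in the other quadrants}),
\end{equation*}
so that $N$ is a closed set whose boundary is tangent to the two axes at $0$ to order $a>1$.

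First I would verify the measure condition in the definition of e.a.s. Working in the first quadrant (the other three are handled identically by the symmetric extension of the system), the complement of $N$ inside the square $[0,\varepsilon]^2$ has area at most $\int_0^\varepsilon k x^a \, \dx = \tfrac{k}{a+1}\varepsilon^{a+1}$, so
\begin{equation*}
\frac{\ell(B_\varepsilon(0) \setminus N)}{\ell(B_\varepsilon(0))} \leq C \varepsilon^{a-1} \xrightarrow{\varepsilon \to 0} 0,
\end{equation*}
since $a>1$. Equivalently, $\ell(B_\varepsilon(0) \cap N)/\ell(B_\varepsilon(0)) \to 1$, which is exactly the density condition required in Definition~1.3.

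Next I would argue asymptotic stability of $\{0\}$ relative to $N$. The computation of the scalar product in the proof of Proposition~\ref{prop1} shows that along the curve $y = kx^a$ the flow points strictly into the region $\{y > kx^a\}$; combined with $\dot y < -\tfrac12 y^2 < 0$ there, any trajectory starting in $N$ is trapped between that curve and the $y$-axis, decreases monotonically in $y$, and thus converges to $0$. Moreover, such a trajectory never leaves the box $[0,x_0] \times [0,y_0]$ determined by its initial condition, so for every $\delta > 0$ any point of $N$ sufficiently close to $0$ lies in $\B_\delta(0)$. This gives both Lyapunov stability and attraction relative to $N$.

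The main obstacle, if any, is bookkeeping rather than mathematics: one has to be careful that the symmetric extension of the system to the whole plane preserves the invariance and decay properties on each axis-adjacent copy of $N$, so that $N$ really is a single set satisfying asymptotic stability globally. Since the extension is by reflection and the dynamics on the coordinate axes already feed into $\{0\}$ from the positive $y$-direction and only away from it on the $x$-axis, the four quadrant pieces of $N$ can be glued without issue. Combining the two bullet points above yields the corollary.
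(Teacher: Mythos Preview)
Your argument is correct. The paper itself gives no explicit proof of this corollary: it is stated as an immediate consequence of Proposition~\ref{prop1}, relying on the general link (recalled in the introduction with a reference to \cite{Lohse2015b}) that a positive stability index implies essential asymptotic stability. Your route bypasses that black-box implication and verifies the definition of e.a.s.\ directly, taking the forward-invariant wedge $\{y \geq kx^a\}$ already built in the proof of Proposition~\ref{prop1} as the witness set $N$; the density estimate $\ell(B_\varepsilon(0)\setminus N)/\ell(B_\varepsilon(0)) \lesssim \varepsilon^{a-1}$ and the monotone decrease of both coordinates in $N$ (since $y>kx^a>x^a$ forces $\dot x<0$ as well as $\dot y<0$) give the required relative asymptotic stability. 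This is more self-contained and makes the geometric mechanism visible, at the minor cost of redoing the invariance argument; the paper's one-line approach is shorter but leans on an external reference.
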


\subsection{Negative Stability Indices}
We now strive for a similar result with negative stability indices. An analogous calculation for system~(\ref{system1}) with $a<1$ does not yield the desired flow, since no suitable $k$ can be found to obtain a positive scalar product as above: we would need $k>1$ as before, but with $a<1$ obtaining a positive scalar product requires $k<\frac{a-\frac{1}{2}}{a-1}<1$.

However, with $a\in(0,1)$ the following modification of system~(\ref{system1}) does the job:
\begin{align}\label{system2}
\begin{cases}
\dot x &= x(\frac{1}{2}x^a -y)\\
\dot y &= y^2 \left (x^a -y \right )
\end{cases}
\end{align}

Note that the smoothness of system~(\ref{system2})  is most severely limited by the $x$-term in the $y$-equation: since $a \in (0,1)$, the derivative of the second equation with respect to $x$ is undefined at the origin. It is also worth pointing out that a stronger contraction in the $y$-direction than in system~(\ref{system1})  is required to achieve the desired result, as becomes apparent in the calculations below.

As before the coordinate axes are invariant, and for $y=0$ we have $\dot x = \frac{1}{2}x^{a+1}>0$, so expanding dynamics on the $x$-axis; while for $x=0$ we have $\dot y=-y^3<0$, so contracting dynamics on the $y$-axis. Note that the position of the $x$- and $y$-nullclines has been reversed compared to system~(\ref{system1}), and we may sketch the phase portrait as in Figure~\ref{fig-less1}.

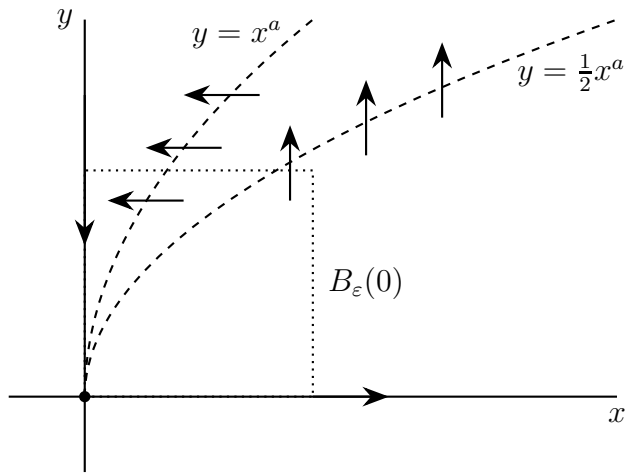
\begin{figure}[!htb]
 \centering
\begin{tikzpicture}
\draw[thick] (-1,0) -- (7,0) node[anchor=north]{$x$};
\draw[thick] (0,-1) -- (0,5) node[anchor=east]{$y$};
\draw[dashed, thick, rotate=90] (0,0) parabola (5,-3);
\draw[dashed, thick, rotate=90] (0,0) parabola (5,-7);
\draw[dotted, thick] (0,0) rectangle (3,3);
\node at (0,0) [circle,fill,inner sep=1.5pt]{};
\draw [thick, -{Stealth[scale=1.5]}](0,4) -- (0,2);
\draw [thick, -{Stealth[scale=1.5]}](3,0) -- (4,0);
\draw [thick, -{Stealth[scale=1.5]}](1.3,2.6) -- (0.3,2.6);
\draw [thick, -{Stealth[scale=1.5]}](1.8,3.3) -- (0.8,3.3);
\draw [thick, -{Stealth[scale=1.5]}](2.3,4) -- (1.3,4);
\draw [thick, -{Stealth[scale=1.5]}](2.7,2.6) -- (2.7,3.6);
\draw [thick, -{Stealth[scale=1.5]}](3.7,3.2) -- (3.7,4.2);
\draw [thick, -{Stealth[scale=1.5]}](4.7,3.7) -- (4.7,4.7);
\node at (2,4.8) { $y=x^a$};
\node at (6.4,4.3) { $y=\frac{1}{2}x^a$};
\node at (3.7,1.5) { $B_\varepsilon(0)$};
\end{tikzpicture}
\caption{Nullclines for system~(\ref{system2}) with $a<1$.}
\label{fig-less1}
\end{figure}

\begin{proposition}\label{prop2}
In system~(\ref{system2}), for $a < 1$ the stability index of the origin is $\sigma(0)=1-\frac{1}{a}<0$.
\end{proposition}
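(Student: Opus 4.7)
The plan is to follow the template of Proposition~\ref{prop1}, adapted to the new geometry. First, I pin down the basin via the scalar-product/normal-vector technique used there. The phase portrait in Figure~\ref{fig-less1} suggests that near the origin $\B(0)$ is, up to constants, the wedge above the lower nullcline $y=\frac{1}{2}x^a$. For any $c<\frac{1}{2}$ I would compute
\[
\langle(\dot x,\dot y),(-ca x^{a-1},1)\rangle = x^{2a}\left[-ca\left(\frac{1}{2}-c\right)+c^2(1-c)x^a\right]
\]
on the curve $y=cx^a$ and note that it is negative for all sufficiently small $x>0$; hence the wedge $\{0<y<cx^a\}$ is locally forward invariant. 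Inside this wedge $\dot x=x(\frac{1}{2}x^a-y)>0$, so $x(t)$ is strictly increasing and bounded below by $x(0)>0$, which rules out convergence to $0$. Therefore $\B(0)\cap B_\varepsilon(0)\subseteq\{y\geq cx^a\}\cap B_\varepsilon(0)$ for small $\varepsilon$.

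The matching inclusion comes from the same calculation with the opposite sign. For $k>\frac{1}{2}$ the scalar product above is positive on $y=kx^a$ for small $x$, so $\{y>kx^a\}$ is locally forward invariant. Taking $k>1$ for convenience, inside this region we also have $y>x^a$, so both $\dot x<0$ and $\dot y<0$; the coordinates are monotone decreasing and bounded below by $0$, and since the only equilibrium of the system in the closed first quadrant is the origin, such trajectories must converge to $0$ and thus lie in $\B(0)$.

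Second, I estimate areas. Since $a<1$, the curve $y=kx^a$ is nearly vertical near the origin and meets the boundary of $B_\varepsilon(0)$ at $x\approx(\varepsilon/k)^{1/a}\ll\varepsilon$. A direct integration in the first quadrant yields
\[
\ell\bigl(\{y>kx^a\}\cap B_\varepsilon(0)\bigr)\sim \frac{a\,k^{-1/a}}{a+1}\,\varepsilon^{1+1/a},
\]
and likewise with $c$ in place of $k$. Dividing by $\ell(B_\varepsilon(0))\sim\varepsilon^2$ sandwiches $\Sigma_\varepsilon(0)$ between two positive constant multiples of $\varepsilon^{1/a-1}$. Taking logarithms (and noting $\ln\varepsilon<0$) pins down $\sigma_-(0)=\frac{1}{a}-1$, while $\Sigma_\varepsilon(0)\to 0$ gives $\ln(1-\Sigma_\varepsilon(0))\to 0$ and hence $\sigma_+(0)=0$. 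Therefore $\sigma(0)=\sigma_+(0)-\sigma_-(0)=1-\frac{1}{a}$, as claimed.

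The main obstacle is showing that the wedge $\{y>kx^a\}$ truly lies in $\B(0)$: the scalar-product bookkeeping only yields local forward invariance, and upgrading this to actual convergence to the origin requires the monotonicity of $x(t)$ and $y(t)$ together with the uniqueness of the equilibrium in the closed first quadrant.
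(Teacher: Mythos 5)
Your architecture matches the paper's: sandwich $\B(0)\cap B_\varepsilon(0)$ between two wedges bounded by curves $y=\mathrm{const}\cdot x^a$ using a scalar product with a normal vector, then integrate areas. On the attracting side you are in fact more careful than the paper (which simply takes the curve $y=x^a$ and reads $\dot x,\dot y<0$ off the nullclines): upgrading forward invariance to convergence via monotonicity of both coordinates plus uniqueness of the equilibrium in the closed quadrant is exactly what is needed there, and your area estimates and the bookkeeping $\sigma_+(0)=0$, $\sigma_-(0)=\frac{1}{a}-1$ are correct.

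The genuine gap is on the other side, in the step ``$x(t)$ is strictly increasing and bounded below by $x(0)>0$, which rules out convergence to $0$.'' Your own computation shows the sign on $y=cx^a$ is that of $-ca(\tfrac{1}{2}-c)+c^2(1-c)x^a$, which is negative only for $x^a<\frac{a(1/2-c)}{c(1-c)}$; beyond that threshold the flow crosses the curve upward, so the wedge is only invariant for small $x$. But in $\{0<y<cx^a\}$ both $\dot x>0$ and $\dot y>0$, so every trajectory is driven out of the range where the invariance holds: from $\frac{dy}{dx}=\frac{y^2(x^a-y)}{x(\frac{1}{2}x^a-y)}\geq\frac{2(1-c)y^2}{x}$ one sees $y$ would blow up at a finite value of $x$, hence the trajectory must cross $y=cx^a$, then $y=\frac{1}{2}x^a$, then $y=x^a$ (the flow on each nullcline points into the region above it), after which it lies in the forward-invariant set $\{y>x^a\}$ and converges monotonically to the origin. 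So these points do belong to the global basin $\B(0)$, merely after a huge excursion; what your argument actually establishes is that they leave every small $B_\delta(0)$ and so are excluded from $\B_\delta(0)$, i.e.\ it computes $\sigma_{\loc}(0)=1-\frac{1}{a}$ rather than $\sigma(0)$ as defined via $\B(0)$. The paper's own proof asserts the same non-attraction claim with no more justification than yours, so the gap is inherited rather than introduced; but to close it one must either work with the local basins throughout or supply an argument controlling the global excursion.
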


\begin{proof}
We argue in the same way as in the proof of Proposition~\ref{prop1} but with reversed justifications for the two inequalities: first observe from Figure~\ref{fig-less1} that for $y>x^a$ we have $\dot x, \dot y <0$ and therefore all such points belong to the (local) basin of attraction of the origin. Thus, we obtain:
$$\ell(B_\varepsilon(0) \cap \B(0)) \geq \int\limits_0^\varepsilon x^{\frac{1}{a}} \dx = \frac{a}{1+a} \varepsilon^{1+\frac{1}{a}}$$
and therefore 
$$\Sigma_\varepsilon(0) = \frac{\ell (B_\varepsilon(0) \cap \B(0)) }{\ell(B_\varepsilon(0))}  \geq  \frac{1}{\varepsilon^2} \frac{a}{1+a}  \varepsilon^{1+\frac{1}{a}} = \frac{a}{1+a}  \varepsilon^{\frac{1}{a}-1},$$
hence
$$\sigma_-(0)= \lim_{\varepsilon \to 0} \frac{\ln (\Sigma_\varepsilon(0))}{\ln (\varepsilon)} \leq \lim_{\varepsilon \to 0} \frac{\ln (\varepsilon^{\frac{1}{a}-1})}{\ln (\varepsilon)} =\frac{1}{a}-1,$$
which finally implies $\sigma(0)=\sigma_+(0)-\sigma_-(0) \geq \ 1-\frac{1}{a}$.

\medbreak

For the other inequality, we also proceed in a similar way as before, showing that along $(x,kx^a)$ the angle between $(\dot x, \dot y)$ and the normal vector $(akx^{a-1},-1)$ is in $(-\frac{\pi}{2}, \frac{\pi}{2})$ for suitable $0<k<\frac{1}{2}$. This implies that the region with $y<kx^a$ is forward invariant under the dynamics of system~(\ref{system2}). Moreover, solutions with initial conditions in it do not limit to the origin in forward time and thus do not belong to $\B(0)$, which enables our second estimate for the stability index. Again we consider the scalar product:

\begin{align*}
\langle (\dot x , \dot y),(akx^{a-1},-1) \rangle &= akx^a \left(\frac{1}{2}x^a-y \right) - y^2(x^a-y)\\
&=akx^a \left( \frac{1}{2}x^a-kx^a \right) - (kx^a)^2(x^a-kx^a)\\
&=kx^{2a} \left(a\left(\frac{1}{2}-k\right)-kx^a(1-k) \right).
\end{align*}

The second term in parentheses goes to zero when $x \to 0$, while the first one is constant in $x$ and positive for $0<k<\frac{1}{2}$. Thus, with $k \in \left(0, \frac{1}{2} \right)$ the scalar product is positive for sufficiently small $x>0$. Again, similar calculations as above now yield $\sigma_-(0) \geq \frac{1}{a}-1$ and thus finally $\sigma(0)=1-\frac{1}{a}<0$.  
\end{proof}

\begin{corollary}
Given any $s<0$, set $a:=\frac{1}{1-s}\in(0,1)$ to obtain $\sigma(0)=s$ in system~(\ref{system2}).
\end{corollary}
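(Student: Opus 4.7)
The plan is to observe that this corollary is an immediate algebraic consequence of Proposition~\ref{prop2}, and the proof amounts to verifying two small computations: first, that the prescribed value of $a$ lies in the open interval $(0,1)$ so that Proposition~\ref{prop2} actually applies, and second, that substituting this $a$ into the stability-index formula $\sigma(0) = 1 - \frac{1}{a}$ yields exactly $s$.

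For the first step, I would note that $s < 0$ implies $1 - s > 1$, so $a = \frac{1}{1-s}$ lies in $(0,1)$ as required. For the second step, a direct substitution gives $\frac{1}{a} = 1 - s$, hence $\sigma(0) = 1 - \frac{1}{a} = 1 - (1-s) = s$, matching the claim.

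There is essentially no obstacle here; the corollary is a reformulation of the proposition chosen so that the parameter $a$ is solved for in terms of the desired index $s$. The only conceptual point worth flagging in the write-up is the bijection $a \mapsto 1 - \frac{1}{a}$ between $(0,1)$ and $(-\infty, 0)$, whose inverse $s \mapsto \frac{1}{1-s}$ is precisely the formula in the corollary. This mirrors the structure of the analogous positive-index corollary and so could even be stated jointly, but given the paper's organization it is cleanest to keep it as a one-paragraph direct computation invoking Proposition~\ref{prop2}.
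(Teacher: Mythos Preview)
Your proposal is correct and matches the paper's treatment: the paper states this corollary without proof, regarding it as an immediate algebraic consequence of Proposition~\ref{prop2}, which is exactly the two-step verification you outline.
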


\begin{corollary}
For $a<1$ the origin in system~(\ref{system2}) is f.a.s., but not e.a.s.
\end{corollary}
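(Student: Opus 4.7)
The plan is to deduce both claims from the two forward-invariant wedges already identified in the proof of Proposition~\ref{prop2}: the upper wedge $W_+:=\{x,y>0,\,y>x^a\}$ on which $\dot x,\dot y<0$ (so trajectories decrease to the origin), and the lower wedge $W_-:=\{0<y<kx^a\}$ for some $k\in(0,1/2)$ (on which trajectories escape and never return to the origin). No further dynamical analysis is needed, only a careful bookkeeping of Lebesgue measures.

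For f.a.s.\ I would fix $\delta>0$ arbitrary and note that along any trajectory in $W_+$ the squared Euclidean norm $x^2+y^2$ is a strict Lyapunov function, so a trajectory starting in $W_+\cap B_\delta(0)$ stays in $B_\delta(0)$ for all positive times and converges to~$0$. Hence $W_+\cap B_\delta(0)\subset\B_\delta(0)$; and since $W_+$ contains an open set accumulating at the origin (e.g.\ $\{y>2x^a\}$), this intersection has positive Lebesgue measure, giving f.a.s.\ in the first quadrant. By the symmetric extension to the other quadrants the argument is not affected.

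For the negative statement, the plan is to upper-bound $\Sigma_\varepsilon(0)$ and contradict the defining limit~$1$ in the e.a.s.\ definition. From the scalar-product argument in Proposition~\ref{prop2} there exists $x^*>0$ such that $W_-\cap\{x<x^*\}$ is forward-invariant and disjoint from $\B(0)$. For $\varepsilon<x^*$ one therefore has
$$\ell(B_\varepsilon(0)\cap\B(0))\leq\ell(B_\varepsilon(0)\cap\{y\geq kx^a\})=O(\varepsilon^{1+1/a}),$$
so $\Sigma_\varepsilon(0)=O(\varepsilon^{1/a-1})\to 0$ as $\varepsilon\to 0$, since $1/a-1>0$. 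If now $0$ were e.a.s.\ with witness set $N$, asymptotic stability relative to~$N$ would force $N\cap B_{\delta_0}(0)\subset\B(0)$ for some $\delta_0>0$, and then $\ell(N\cap B_\varepsilon(0))/\ell(B_\varepsilon(0))\leq\Sigma_\varepsilon(0)\to 0$, contradicting the required limit~$1$.

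The main obstacle I expect is the upper-bound integral: because $a<1$, the curve $y=kx^a$ crosses $y=\varepsilon$ while still inside $0<x<\varepsilon$, so the area computation must be split at $x_0=(\varepsilon/k)^{1/a}$ rather than integrated straight from $0$ to $\varepsilon$. Once this small geometric care is taken, the rest is a routine assembly of facts already contained in Proposition~\ref{prop2}.
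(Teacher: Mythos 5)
Your argument is correct and follows the route the paper intends: the corollary is stated without a separate proof because the proof of Proposition~\ref{prop2} already establishes both ingredients you use, namely that the wedge $\{y>x^a\}$ lies in the (local) basin of attraction (giving f.a.s.) and that $\Sigma_\varepsilon(0)=O(\varepsilon^{1/a-1})\to 0$ (which is incompatible with e.a.s.). The only point worth making explicit is that your Lyapunov-function step tacitly uses forward-invariance of $W_+$, which holds because on the boundary $y=x^a$ the vector field equals $\left(-\tfrac12 x^{a+1},0\right)$ and points into $W_+$.
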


With Propositions~\ref{prop1}~and~\ref{prop2} we have established that in these systems of equations we can obtain any positive or negative number as the stability index of the origin.

\subsection{Infinite Stability Indices}
More generally, instead of $x \mapsto x^a$ let us now take any function $x \mapsto \phi(x)$ and consider the following system for $x,y \geq 0$: 

\begin{align}\label{system3}
\begin{cases}
\dot x &= x \left (y- \frac{1}{2}\phi(x)\right ) \\
\dot y &= y (y - \phi(x))
\end{cases}
\end{align}

The smoothness of system~(\ref{system3}) is determined by the smoothness of $\phi$. If $\phi$ is non-negative and vanishes only at $0$, we can draw similar initial conclusions as above: the coordinate axes are invariant with contraction along the $x$-axis, where $\dot x=-\frac{1}{2}x\phi(x)>0$; and expansion along the $y$-axis, where $\dot y = y^2$.  Looking at the nullclines we obtain the sketch of the dynamics in Figure~\ref{fig-supcusp}.

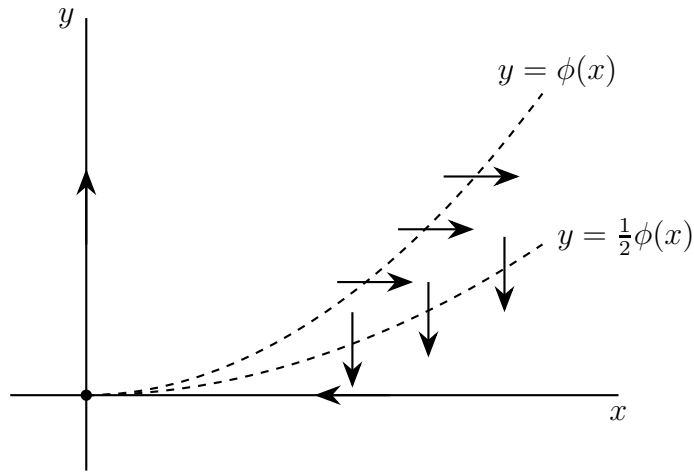
\begin{figure}[!htb]
 \centering
\begin{tikzpicture}
\draw[thick] (-1,0) -- (7,0) node[anchor=north]{$x$};
\draw[thick] (0,-1) -- (0,5) node[anchor=east]{$y$};
\draw[dashed, thick] (0,0) parabola (6,2);
\draw[dashed, thick] (0,0) parabola (6,4);
\node at (0,0) [circle,fill,inner sep=1.5pt]{};
\draw [thick, {Stealth[scale=1.5]}-](0,3) -- (0,2);
\draw [thick, {Stealth[scale=1.5]}-](3,0) -- (4,0);
\draw [thick, {Stealth[scale=1.5]}-](4.3,1.5) -- (3.3,1.5);
\draw [thick, {Stealth[scale=1.5]}-](5.1,2.2) -- (4.1,2.2);
\draw [thick, {Stealth[scale=1.5]}-](5.7,2.9) -- (4.7,2.9);
\draw [thick, {Stealth[scale=1.5]}-](3.5,0.1) -- (3.5,1.1);
\draw [thick, {Stealth[scale=1.5]}-](4.5,0.5) -- (4.5,1.5);
\draw [thick, {Stealth[scale=1.5]}-](5.5,1.1) -- (5.5,2.1);
\node at (6.2,4.3) { $y=\phi(x)$};
\node at (7.1,2.1) { $y=\frac{1}{2}\phi(x)$};
\end{tikzpicture}
 \caption{Dynamics for system~(\ref{system3}).}
 \label{fig-supcusp}
\end{figure}

We now pick a specific function for $\phi$ which is used in \cite{Lohse2015} to show that it is possible to have an equilibrium with a stability index equal to $+\infty$, but a local stability index equal to $-\infty$. This is achieved by making the equilibrium globally attracting, but confining the local basin of attraction within the region where $y<\phi(x)$. With this choice of $\phi$ for system~(\ref{system3}), we obtain the same extreme discrepancy between the local and non-local stability index of the origin, but achieve a higher degree of smoothness of the system than in \cite{Lohse2015}.

\begin{proposition}\label{prop-infty}
In system~(\ref{system3}) define $\phi$ as $\phi(x)=(2x+1)\exp(-\frac{1}{x})$ for $x>0$ and $\phi(0)=0$. Then we have $\sigma_\loc(0)=-\infty$ and $\sigma(0)=+\infty$.
\end{proposition}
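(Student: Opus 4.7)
I would prove the two claims independently, both exploiting that $\phi$ vanishes to infinite order at $0$.

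For $\sigma(0) = +\infty$, the goal is to show that the origin is globally attracting on the closed first quadrant. Together with the stated convention this yields $\Sigma_\varepsilon(0) = 1$ for all $\varepsilon > 0$, hence $\sigma_+(0) = +\infty$ and $\sigma_-(0) = 0$, so $\sigma(0) = +\infty$. Global attraction I would establish region by region using Figure~\ref{fig-supcusp}. In the lower region $\{y < \phi(x)/2\}$ both $\dot x, \dot y < 0$, so monotone bounded convergence forces the trajectory to the only admissible equilibrium $0$. In the middle region $\{\phi(x)/2 < y < \phi(x)\}$ the direct computation
$$\left.\frac{d}{dt}\bigl(y - \tfrac{1}{2}\phi(x)\bigr)\right|_{y = \phi(x)/2} = \tfrac{1}{2}\phi(x)\bigl(\tfrac{1}{2}\phi(x) - \phi(x)\bigr) = -\tfrac{1}{4}\phi(x)^2 < 0$$
shows that the lower nullcline is crossed transversally into the lower region. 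In the upper region $\{y > \phi(x)\}$ the key identity
$$\frac{d}{dt}\log\frac{y}{x} = \frac{\dot y}{y} - \frac{\dot x}{x} = (y - \phi(x)) - (y - \tfrac{1}{2}\phi(x)) = -\tfrac{1}{2}\phi(x)$$
forces $y/x$ to decrease strictly; combined with $\phi(x)/x \to 2$ as $x \to \infty$, this rules out the trajectory remaining in $\{y > \phi(x)\}$ for all time, so it enters the middle region. Chaining the three cases, every orbit ends up in the lower region and therefore converges to $0$.

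For $\sigma_\loc(0) = -\infty$, the plan is to confine $\B_\delta(0) \cap B_\varepsilon(0)$ to the set $\{y \leq \phi(x)\}$ for sufficiently small $\varepsilon$ (depending on $\delta$). Any initial condition $(x_0, y_0) \in B_\varepsilon(0)$ lying in the upper region has $\dot x > 0$, and as analysed above the trajectory must push $x$ up until $y$ meets $\phi(x)$, a process which for typical initial conditions in the upper region drives $x$ substantially beyond $\varepsilon$ and out of any fixed $B_\delta(0)$. Such trajectories are excluded from $\B_\delta(0)$, yielding
$$\ell\bigl(\B_\delta(0) \cap B_\varepsilon(0)\bigr) \leq \int_0^\varepsilon \phi(x)\,\dx \leq \varepsilon\,\phi(\varepsilon) = \varepsilon(2\varepsilon+1)e^{-1/\varepsilon},$$
so $\Sigma_{\varepsilon,\delta}(0) \leq C(2\varepsilon+1)e^{-1/\varepsilon}/\varepsilon$. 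Since $e^{-1/\varepsilon}$ decays faster than any power of $\varepsilon$, the ratio $\ln\Sigma_{\varepsilon,\delta}/\ln\varepsilon$ is dominated by $-1/(\varepsilon\ln\varepsilon) \to +\infty$, giving $\sigma_{\loc,-}(0) = +\infty$. Since $\Sigma_{\varepsilon,\delta}\to 0$, trivially $\sigma_{\loc,+}(0) = 0$, so $\sigma_\loc(0) = -\infty$.

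The main obstacle I expect is the upper-region analysis. Solutions starting with $y_0 \gg \phi(x_0)$ satisfy $\dot y \approx y^2$ to leading order, which naively threatens finite-time blow-up; one must verify that $\phi(x)$---which also grows once $x$ does---catches up with $y$ in time to prevent the escape. The identity $\frac{d}{dt}\log(y/x) = -\phi(x)/2$ is the decisive control: it bounds $y$ by a constant multiple of $x$ for all time, and once $x$ grows into the regime where $\phi(x) \sim 2x$, the trajectory is forced across the curve $y = \phi(x)$ before any singularity forms. Making this precise, e.g.\ by parametrising the orbit by $x$ and integrating $dy/dx = (y/x)(y - \phi(x))/(y - \phi(x)/2)$, simultaneously underpins the lower bound on the upper-region excursion in $x$ needed for $\sigma_\loc$ and the exit-from-upper-region claim needed for global attraction.
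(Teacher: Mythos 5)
Your treatment of $\sigma(0)=+\infty$ is essentially the paper's argument in different clothing: the paper takes $V(x,y)=x/y$ and computes $\dot V=\tfrac{x\phi(x)}{2y}>0$, which is exactly your identity $\tfrac{d}{dt}\log(y/x)=-\tfrac12\phi(x)$; both say that the slope of the ray from the origin through the solution decreases monotonically until the trajectory falls below the graph of $\phi$. Your region-by-region bookkeeping (transversality at the nullcline $y=\tfrac12\phi(x)$, monotone convergence in the lower region) makes explicit what the paper only reads off from the figure, and your remark that finite-time blow-up in the upper region must be excluded is a genuine point the paper glosses over; the control $y\le(y_0/x_0)x$ together with $\phi(x)/x\to2$ does close it.

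The $\sigma_\loc(0)=-\infty$ part, however, has a genuine gap, and it sits precisely in the step you hedge with ``for typical initial conditions''. The estimate $\ell(\B_\delta(0)\cap B_\varepsilon(0))\le\int_0^\varepsilon\phi(x)\dx$ requires that \emph{no} point of $\{y>\phi(x)\}\cap B_\varepsilon(0)$ lie in $\B_\delta(0)$, and your own key identity shows this is false. Since $y/x$ decreases along orbits while $x\mapsto\phi(x)/x=(2+\tfrac1x)e^{-1/x}$ is increasing, a trajectory starting at $(x_0,y_0)$ with $y_0>\phi(x_0)$ exits the upper region at an abscissa $x_1$ with $\phi(x_1)/x_1\le y_0/x_0$, i.e.\ $x_1\le R(y_0/x_0)$ where $R$ inverts $x\mapsto\phi(x)/x$; the subsequent excursion through the middle region only reaches $x_2$ with $\phi(x_2)\le 2\phi(x_1)$, hence $x_2\le 2R(y_0/x_0)$, and $y\le\phi(x_1)$ throughout. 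So the maximal excursion is governed by the initial slope $y_0/x_0$ alone, not by the initial distance to the origin, and $R(c)\to0$ as $c\to0$. Consequently, for every fixed $\delta>0$ the entire sector $\{0<y<c(\delta)x\}$ near the origin (with $c(\delta)>0$ chosen so that $2\sqrt2\,R(c(\delta))<\delta$) lies in $\B_\delta(0)$, giving $\Sigma_{\varepsilon,\delta}(0)\ge\tfrac12c(\delta)>0$ uniformly in $\varepsilon$; the inner limit defining $\sigma_{\loc,-}(0)$ is then $0$, not $+\infty$, and your claimed super-polynomial decay of $\Sigma_{\varepsilon,\delta}$ cannot hold.

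You should know that the paper's own proof rests on the same unsupported assertion (``these trajectories\dots do not belong to $\B_\delta(0)$ for sufficiently small $\delta$'', where the admissible $\delta$ in fact depends on the point, with the quantifiers in the wrong order for the iterated limit). The construction in \cite{Lohse2015} avoids this because there the vector field is engineered so that \emph{every} orbit starting above the cusp leaves a neighbourhood of fixed, point-independent size before returning. For system~(\ref{system3}) as written I see no way to repair the step: any correct proof of $\sigma_\loc(0)=-\infty$ would have to contradict the sector containment above. If you want a watertight write-up, either restrict to the (correct) claim $\sigma(0)=+\infty$, or modify the vector field in the upper region so that the excursion is bounded below uniformly over initial conditions.
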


\begin{proof}
We start with the claim about the local stability index. It is clear from Figure~\ref{fig-supcusp} that all $(x,y)$ with $y<\phi(x)$ belong to $\B(0)$, even to $\B_\delta(0)$ for suitable $\delta>0$. For $(x,y)$ with $y>\phi(x)$ we have $\dot x, \dot y >0$, so these trajectories first move away from the origin in both coordinates and do not belong to $\B_\delta(0)$ for sufficiently small $\delta>0$. Thus, by the same arguments as in \cite{Lohse2015}, we have $\sigma_\loc(0)=-\infty$.

To prove the second claim we show that in system~(\ref{system3}) all trajectories off the coordinate axes limit to $0$ in forward time and are thus homoclinic to the origin. In fact, because of the above it suffices to ensure that all trajectories starting with $y>\phi(x)$ eventually cross the graph of $\phi$.

To this end we show that $V(x,y):=\frac{x}{y}$ is a Lyapunov function for system~(\ref{system3}):
\begin{align*}
\frac{\partial}{\partial t}V(x,y)&=\frac{\dot x y - x \dot y}{y^2}\\
&=\frac{x(y-\frac{1}{2}\phi(x))y - xy(y-\phi(x))}{y^2}\\
&=\frac{x}{y} \left( y- \frac{1}{2}\phi(x) - (y-\phi(x)) \right)\\
&=\frac{x\phi(x)}{2y}\\
&>0
\end{align*}
Thus, $V$ increases along solutions to system~(\ref{system3}). The level sets of $V$ are straight lines through the origin, with the values of $V$ increasing as the slope of these lines decreases. Since the derivative above is bounded away from zero off the coordinate axes, solutions cross level sets of $V$ with non-vanishing speed and thus every solution eventually crosses the graph of $\phi$, therefore converging to the origin. Thus, $\sigma(0)=+\infty$ as claimed.
\end{proof}

Note that the corresponding example in \cite{Lohse2015} has a right-hand side that is only continuous, not differentiable. Our choice of $\phi$ in Proposition~\ref{prop-infty} makes system~(\ref{system3}) $C^\infty$, so we provide a smooth example of this kind.
\section{Concluding Remarks}\label{sec-comments}
We have discussed two families of systems of ordinary differential equations on $\R^2$ that possess a non-hyperbolic equilibrium with an arbitrary real number $s \in \R \setminus\{0\}$ as its stability index. While we give an explicit construction for any such $s$, it is worth pointing out that similar results can be obtained by taking system~(\ref{system1}) or (\ref{system2}) with a fixed parameter $a$ and transforming it through $(x,y)=(u^p,v)$. This coordinate change maps a curve given by $y=kx^a$ to that given by $v=kx^{pa}$ and thus yields a different stability index. For example, if $a>1$ is fixed and system~(\ref{system1}) is transformed with $p \in \R$ such that $pa>1$, then it follows directly from Proposition~\ref{prop1} that $\sigma(0)=pa-1$ in the transformed system.

Generalizing our construction and employing results from \cite{Lohse2015}, in Proposition~\ref{prop-infty} we have designed a system with a strongly attracting equilibrium ($\sigma(0)=+\infty$) that is far from being asymptotically stable ($\sigma_\loc(0)=-\infty$). In contrast to earlier such examples ours has a $C^\infty$ right-hand side, answering an open question posed in \cite{Lohse2015}.

In section~\ref{sec-example} we have not considered the case $\sigma(0)=0$. However, it is straightforward to write down such a system: one simply needs to make sure that $\Sigma_\varepsilon(0)$ is constant, i.e.\ independent of $\varepsilon>0$. This is the case if the basin of attraction is linearly bounded, see e.g.\ the piecewise linear vector field on $\R^2$ displayed in Figure~\ref{fig-zero}, where we have $\Sigma_\varepsilon(0)=\frac{1}{4}$ for all $\varepsilon>0$.

Our work establishes explicit examples for non-asymptotically stable equilibria that are fragmentarily or essentially asymptotically stable. This may prove useful in future endeavors to develop more complicated systems with heteroclinic connections that possess a prescribed level of stability, thus extending previous efforts towards the design of systems with a desired connection structure between equilibria, see e.g.\ \cite{AshwinCastroLohse2020, AP2013}.

\begin{figure}[!htb]
 \centering
\begin{tikzpicture}
\draw[thick] (-4,0) -- (4,0) node[anchor=north]{$x$};
\draw[thick] (0,-4) -- (0,4) node[anchor=east]{$y$};
\draw[dotted, thick] (-1.8,-1.8) rectangle (1.8,1.8);
\node at (0,0) [circle,fill,inner sep=1.5pt]{};
\draw [thick, -{Stealth[scale=1.5]}](0,2) -- (0,3);
\draw [thick, -{Stealth[scale=1.5]}](0,-3) -- (0,-2);
\draw [thick, -{Stealth[scale=1.5]}](2,0) -- (3,0);
\draw [thick, -{Stealth[scale=1.5]}](-3,0) -- (-2,0);
\draw [thick, -{Stealth[scale=1.5]}](0,0) -- (1,3);
\draw [thick, -{Stealth[scale=1.5]}](0,0) -- (2,2);
\draw [thick, -{Stealth[scale=1.5]}](0,0) -- (3,1);
\draw [thick](-3,-1) -- (0,0);
\draw [thick](-2,-2) -- (0,0);
\draw [thick](-1,-3) -- (0,0);
\draw [thick, -{Stealth[scale=1.5]}](-3,-1) -- (-2,-2/3);
\draw [thick, -{Stealth[scale=1.5]}](-2,-2) -- (-1,-1);
\draw [thick, -{Stealth[scale=1.5]}](-1,-3) -- (-2/3,-2);
\draw[thick, domain=-3.5:-0.4,smooth,variable=\t] plot ({\t},{-1.5/\t});
\draw[thick, domain=-3.5:-0.15,smooth,variable=\t] plot ({\t},{-0.5/\t});
\draw [thick, -{Stealth[scale=1.5]}](-0.72,0.7) -- (-0.62,0.8);
\draw [thick, -{Stealth[scale=1.5]}](-1.22,1.23) -- (-1.11,1.33);
\draw[thick, domain=0.4:3.5,smooth,variable=\t] plot ({\t},{-1.5/\t});
\draw[thick, domain=0.15:3.5,smooth,variable=\t] plot ({\t},{-0.5/\t});
\draw [thick, -{Stealth[scale=1.5]}] (0.67,-0.75) -- (0.77,-0.64);
\draw [thick, -{Stealth[scale=1.5]}] (1.18,-1.27) -- (1.28,-1.16);
\node at (3,3) { $\dot x = x, \ \dot y = y$};
\node at (-3,3) { $\dot x = -x, \ \dot y = y$};
\node at (3,-3) { $\dot x = x, \ \dot y = -y$};
\node at (-3,-3) { $\dot x = -x, \ \dot y = -y$};
\node at (-1.6,2.1) { $B_\varepsilon(0)$};
\end{tikzpicture}
 \caption{A system with $\sigma(0)=0$.}
 \label{fig-zero}
\end{figure}
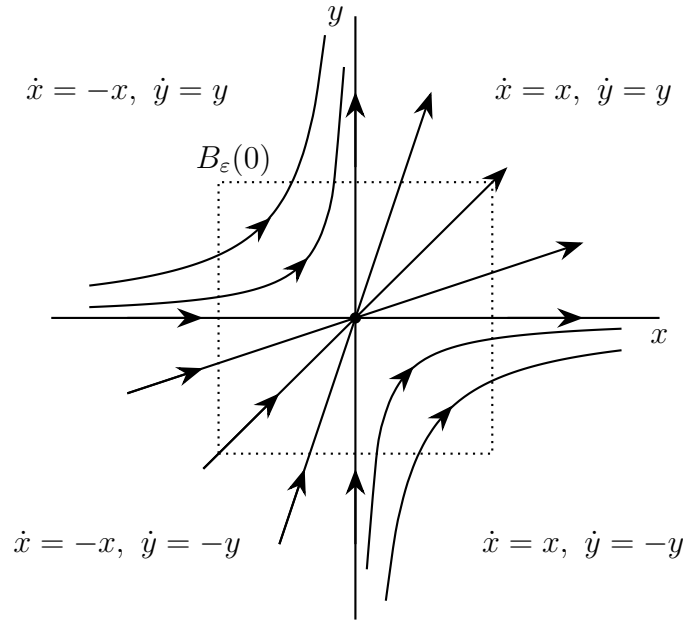

\paragraph{Acknowledgements:}
The author is grateful for insightful comments by two anonymous reviewers of an earlier version of this work.

\FloatBarrier

\end{document}